\definecolor{darkred}{RGB}{139,0,0}
\definecolor{darkblue}{RGB}{0,0,139}
\definecolor{darkgreen}{RGB}{0,100,0}
   \def\MR#1{}}
\newcommand{\id}{\ensuremath{\operatorname{id}}}
\newcommand{\im}{\ensuremath{\operatorname{im}}}
\newcommand{\SO}{\ensuremath{\operatorname{SO}}}
\newcommand{\BO}{\ensuremath{\operatorname{BO}}}
\newcommand{\KO}{\ensuremath{\mathbf{KO}}}
\newcommand{\Diffuo}{\ensuremath{\operatorname{Diff}}}
\newcommand{\BlockDiffuo}{\ensuremath{\widetilde{\operatorname{Diff}}}}
\newcommand{\BDiffuo}{\ensuremath{\operatorname{BDiff}}}
\newcommand{\BlockBDiffuo}{\ensuremath{\operatorname{B\widetilde{Diff}}}}
\newcommand{\hAutuo}{\ensuremath{\operatorname{hAut}}}
\newcommand{\BhAutuo}{\ensuremath{\operatorname{BhAut}}}
\newcommand{\hAutWg}{\ensuremath{\operatorname{hAut}_\partial(W_{g,1})}}
\newcommand{\BhAutWg}{\ensuremath{\operatorname{BhAut}_\partial(W_{g,1})}}
\newcommand{\DiffWg}{\ensuremath{\operatorname{Diff}_\partial(W_{g,1})}}
\newcommand{\BDiffWg}{\ensuremath{\operatorname{BDiff}_\partial(W_{g,1})}}
\newcommand{\BlockDiffWg}{\ensuremath{\operatorname{\widetilde{Diff}}_\partial(W_{g,1})}}
\newcommand{\BlockBDiffWg}{\ensuremath{\operatorname{B\widetilde{Diff}}_\partial(W_{g,1})}}
\newcommand{\BhAutWgone}{\ensuremath{\operatorname{BhAut}_\partial(W_{g+1,1})}}
\newcommand{\BDiffWgone}{\ensuremath{\operatorname{BDiff}_\partial(W_{g+1,1})}}
\newcommand{\BlockBDiffWgone}{\ensuremath{\operatorname{B\widetilde{Diff}}_\partial(W_{g+1,1})}}
\newcommand{\interior}[1]{\ensuremath{\operatorname{int}(#1)}}
\newcommand{\catsingle}[1]{\ensuremath{\mathcal{#1}}}
\newcommand{\oH}{\ensuremath{\operatorname{H}}}
\newcommand{\oO}{\ensuremath{\operatorname{O}}}
\newcommand{\oB}{\ensuremath{\operatorname{B}}}
\newcommand{\bfC}{\ensuremath{\mathbf{C}}}
\newcommand{\bfG}{\ensuremath{\mathbf{G}}}
\newcommand{\bfL}{\ensuremath{\mathbf{L}}}
\newcommand{\bfO}{\ensuremath{\mathbf{O}}}
\newcommand{\bfZ}{\ensuremath{\mathbf{Z}}}
\newcommand{\bfQ}{\ensuremath{\mathbf{Q}}}
\newcommand{\bfSp}{\ensuremath{\mathbf{Sp}}}
\newcommand{\fra}{\ensuremath{\mathfrak{a}}}
\newcommand{\frg}{\ensuremath{\mathfrak{g}}}
\newcommand{\cB}{\ensuremath{\catsingle{B}}}
\newcommand{\cL}{\ensuremath{\catsingle{L}}}
\newcommand{\CE}{\ensuremath{\operatorname{CE}}}
\newcommand{\ra}{\rightarrow}
\newcommand{\lra}{\longrightarrow}
\newcommand{\xlra}[1]{\overset{#1}{\longrightarrow}}
\newcommand{\Sp}{\operatorname{Sp}}
\newcommand{\GL}{\operatorname{GL}}
\newcommand{\Der}{\operatorname{Der}}
\newcommand{\Mod}[1]{\ (\mathrm{mod}\ #1)}
\newtheorem{bigthm}{Theorem}
\newtheorem{bigcor}[bigthm]{Corollary}
\newtheorem{thm}{Theorem}[section]
\newtheorem*{nthm}{Theorem}
\newtheorem{lem}[thm]{Lemma}
\theoremstyle{definition}
\theoremstyle{remark}
\newtheorem*{nrem}{Remark}
\begin{document}

\title[A note on rational homological stability for automorphisms of manifolds]{A note on rational homological stability for\\ automorphisms of manifolds}
\author{Manuel Krannich}
\email{krannich@dpmms.cam.ac.uk}
\address{Centre for Mathematical Sciences, Wilberforce Road, Cambridge CB3 0WB, UK}
\begin{abstract}
By work of Berglund and Madsen, the rings of rational characteristic classes of fibrations and smooth block bundles with fibre $D^{2n}\sharp(S^n\times S^n)^{\sharp g}$, relative to the boundary, are for $2n\ge 6$ independent of $g$ in degrees $*\le (g-6)/2$. In this note, we explain how this range can be improved to $*\le g-2$ using cohomological vanishing results due to Borel and classical invariant theory. This implies that the analogous ring for smooth bundles is independent of $g$ in the same range, provided the degree is small compared to the dimension.
\end{abstract}

\maketitle

The classical approach to the homotopy type of the diffeomorphism group $\Diffuo_\partial(M)$ of a manifold relative to its boundary is by comparison to the space $\BlockDiffuo_\partial(M)$ of block diffeomorphisms, since their difference is approximated by Waldhausen's algebraic $K$-theory of spaces. The space $\BlockDiffuo_\partial(M)$ in turn can be understood in terms of the more accessible space of homotopy equivalences $\hAutuo_\partial(M)$ via surgery theory. Assuming that $M$ is of dimension $2n$ and $\partial M$ is nonempty, there are stabilisation maps  
\begin{equation*}
  \begin{gathered}
  \BDiffuo_\partial(M)\lra\BDiffuo_\partial(M\sharp (S^n\times S^n))\\
\BlockBDiffuo_\partial(M)\lra\BlockBDiffuo_\partial(M\sharp (S^n\times S^n))\\
\BhAutuo_\partial(M)\lra\BhAutuo_\partial(M\sharp (S^n\times S^n))
\end{gathered}
\end{equation*}
induced by extending automorphisms by the identity. In \cite{BerglundMadsenI}, Berglund--Madsen proved that for $2n\ge6$ and $M$ being $W_{g,1}=D^{2n}\sharp(S^n\times S^n)^{\sharp g}$, the upper two maps induce rational homology isomorphisms in degrees $*\le\min(n-3,(g-6)/2)$. Independently, Galatius--Randal-Williams \cite{GRWold} showed via a different approach that for $2n\ge6$, the stabilisation map for $\BDiffuo_\partial(W_{g,1})$ induces an isomorphism in integral homology in the range $*\le(g-4)/2$, independent of the dimension. Their method has been extended later \cite{Friedrich,GRWII,GRWI,PerlmutterI,PerlmutterII} to obtain more refined stability results for $\BDiffuo_\partial(M)$ for a wide class of manifolds. In the case of block diffeomorphisms, Berglund--Madsen improved upon their prior results in their impressive work \cite{BerglundMadsen}, inter alia, by removing the dependence of the range on the dimension and by proving a similar result for homotopy automorphisms.

\begin{nthm}[Berglund--Madsen]For $2n\ge4$, the stabilisation map
\[ \oH_*(\BhAutWg;\bfQ)\lra\oH_*(\BhAutWgone;\bfQ)\]
is an isomorphism for $*\le (g-6)/2$ and an epimorphism for $*\le (g-4)/2$. For $2n\ge 6$, the map 
\[\oH_*(\BlockBDiffWg;\bfQ)\lra\oH_*(\BlockBDiffWgone;\bfQ)\] has the same property.\end{nthm}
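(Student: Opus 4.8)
The plan is to prove homological stability by computing both sides rationally and comparing them in a range. By the theory of Koszul-dual models for self-maps, the rational homotopy type of $\BhAutWg$ should be accessible via a chain-level model built from the graded Lie algebra $\frg$ of derivations of the free graded-commutative algebra (or rather the relevant quadratic-dual object) attached to the cohomology of $W_{g,1}$. Concretely, I would first identify, following Berglund--Madsen's earlier work, that $\oH_*(\BhAutWg;\bfQ)$ is computed by the Chevalley--Eilenberg cohomology $\oH^*_{\CE}(\frg_g)$ of an explicit graded Lie algebra $\frg_g$ receiving an action of the arithmetic group $\GL$ or $\Sp$ governing the symplectic/symmetric intersection form on $\oH^n(W_{g,1})$. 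The stabilisation map corresponds to the inclusion $\frg_g\hookrightarrow\frg_{g+1}$ induced by enlarging the hyperbolic form, so the statement becomes that this inclusion induces an isomorphism on $\CE$-cohomology in the stated range.

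The key technical input, as the abstract advertises, is to invoke Borel's vanishing theorem for the stable cohomology of arithmetic groups together with classical invariant theory (first fundamental theorem for the relevant classical group) to control the invariants and coinvariants that appear. First I would decompose the Chevalley--Eilenberg complex, or rather its associated graded, into isotypical pieces for the arithmetic group, using that the derivation Lie algebra is built from tensor powers of the standard representation $\oH^n(W_{g,1};\bfQ)\cong\bfQ^{2g}$. The first fundamental theorem identifies the invariant vectors in these tensor powers with spans of diagrams/pairings, and one checks that the number of generators needed to produce a given class in degree $*$ is bounded below linearly in $*$; this is what forces a class in $\CE$-cohomology of $\frg_g$ to be detected already at a smaller genus once $g$ is large enough relative to $*$. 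Borel's theorem enters to guarantee that the relevant cohomology of the stabilising arithmetic group has stabilised and that higher $\Ext$-contributions vanish in the range, so that the comparison reduces to the combinatorics of the invariant theory.

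Assembling these, I would run a spectral sequence (the one computing $\CE$-cohomology from the filtration by word length in the Lie algebra, with $E_2$-page involving arithmetic-group cohomology with coefficients in symmetric powers of the derivation module) and show that the map of spectral sequences induced by $\frg_g\hookrightarrow\frg_{g+1}$ is an isomorphism on $E_2$ in total degree $*\le (g-6)/2$ and surjective one degree higher, then conclude by convergence. The passage from homotopy automorphisms to block diffeomorphisms is by surgery theory: the map $\BlockBDiffWg\to\BhAutWg$ has homotopy fibre built from the relevant surgery structure space and block normal invariants, whose rational homotopy groups are $\KO$-theoretic and can be controlled uniformly in $g$; granting $2n\ge 6$ (so that surgery applies) one transports the stability range across this fibration, which only requires the dimension hypothesis and not any further genus constraint.

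I expect the main obstacle to be the \emph{bookkeeping of the invariant theory in the unstable range}: Borel's theorem is a stable statement, so the crux is to make the linear-in-$g$ bound on when invariants and their $\Ext$-groups stabilise completely explicit and sharp enough to yield the constant $-6$ (resp.\ $-4$) rather than some weaker slope. Controlling the off-diagonal $\Ext$-terms between the arithmetic group and the coefficient modules in the intermediate degrees, and verifying they do not obstruct the isomorphism before $*=(g-6)/2$, is where the real work lies; the Lie-algebraic and surgery-theoretic scaffolding around it is comparatively formal.
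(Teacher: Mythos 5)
Your treatment of homotopy automorphisms is workable in outline, but be aware that it is not Berglund--Madsen's argument: their proof (which the paper sketches in Section 2) identifies the Chevalley--Eilenberg chains $C^{\CE}_*(\frg_g)$ with values of Schur functors of bounded degree and then feeds this into \emph{Charney's} homological stability theorem for unitary groups plus a hypercohomology argument; that is where the slope-$1/2$ range $*\le (g-6)/2$ comes from. What you propose instead --- untwisting the coefficients via Borel vanishing and computing coinvariants by invariant theory --- is precisely the route the paper itself takes to \emph{improve} the range to $*\le g-2+c$, so if carried out it proves the stated theorem with room to spare. Three caveats on that half: the statement to be stabilised is $\oH_*(G_g;\oH^{\CE}_*(\frg_g))$, not $\oH^{\CE}_*(\frg_g)$ alone (the latter does not stabilise; your later spectral-sequence formulation gets this right); you need the second fundamental theorem of invariant theory, not only the first, together with Zariski density of $G_g$ in $\Sp_{2g}$ or $\oO_{g,g}$, to see that the coinvariants of $H(g)^{\otimes k}$ stabilise; and you need quantitative control of how the range in Borel's theorem depends on the coefficient module (Tshishiku's improvement, or at least an explicit representation-dependent bound), since the modules $\oH_q^{\CE}(\frg_g)$ grow with $g$. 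You correctly flag this last point as the crux.

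The block-diffeomorphism half, however, has a genuine gap. You assert that the homotopy fibre of $\BlockBDiffWg\ra\BhAutWg$ has rational homotopy groups that ``can be controlled uniformly in $g$,'' so that the stability range transports formally across the fibration using only $2n\ge6$. This is false. Surgery theory identifies that fibre with (components of) the block structure space of $W_{g,1}$, whose rational homotopy is of the form $\fra_g=(H(g)\otimes \Pi[-n])_{\ge0}$ with $\Pi=\pi_*(\Omega\BO)\otimes\bfQ$: it contains a copy of $H(g)=\oH_n(W_{g,1};\bfQ)$ in infinitely many degrees, hence has dimension growing linearly in $g$, and it carries a nontrivial action of the arithmetic group $G_g$. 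Transporting homological stability across a fibration whose fibre grows with $g$ is exactly as hard as the original problem: one must redo the entire coefficient-system argument for the enlarged graded Lie algebra $\frg_g\oplus\fra_g$, which is what Berglund--Madsen (and the paper, in Lemma 3.4) do, and which genuinely worsens the Schur-functor degree estimate from $3q/n$ to at best $q$ (whence the dimension-dependent constants $k_n$ in the paper's Theorem B). Nothing about this step is ``comparatively formal,'' and without it the $2n\ge6$ half of the theorem is unproved.
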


One step in the proof relies on a stability result for certain unitary groups due to Charney \cite{Charney}, which is responsible for the specific ranges in the theorem. The main purpose of this note is to explain an improvement of these ranges, obtained primarily by replacing Charney's result with a combination of classical invariant theory and a quantitative version of Borel's vanishing result on the cohomology of arithmetic groups \cite{BorelI,BorelII}.

\begin{bigthm}\label{mainthm} For $2n\ge4$, the stabilisation map
\[ \oH_*(\BhAutWg;\bfQ)\lra\oH_*(\BhAutWgone;\bfQ)\]
is an isomorphism for $*\le g-2+c$, where $c=1$ if $n$ is odd and $c=0$ if $n$ is even. For $2n\ge 6$, the map 
\[\oH_*(\BlockBDiffWg;\bfQ)\lra\oH_*(\BlockBDiffWgone;\bfQ)\] has the same property.
\end{bigthm}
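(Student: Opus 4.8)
The plan is to reduce both stabilisation maps to comparisons between the cohomologies of the arithmetic groups $G_g=\Sp_{2g}(\bfZ)$ (when $n$ is odd) and $G_g=\oO_{g,g}(\bfZ)$ (when $n$ is even) with coefficients in algebraic representations, and then to settle those comparisons by combining classical invariant theory with the quantitative form of Borel's vanishing theorem. Since all spaces in sight are of finite rational type, it suffices to prove the dual statement in cohomology. I would start from the Berglund--Madsen model: the standard representation $H=\widetilde{\oH}{}^{n}(W_{g,1};\bfQ)$ carries the $(-1)^n$-symmetric cup-product pairing, which is exactly what cuts the automorphisms of $H$ down to the symplectic group for $n$ odd and the orthogonal group for $n$ even, and rationally $\BhAutWg$ is governed by $G_g$ acting on the Chevalley--Eilenberg cohomology $A_g=\oH^{*}_{\mathrm{CE}}(\mathfrak{g}_g)$ of the positively graded part $\mathfrak{g}_g$ of the Lie algebra of derivations of the free graded Lie algebra on $H$. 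Concretely, there is a spectral sequence $\oH^{p}(G_g;A_g^{q})\Rightarrow\oH^{p+q}(\BhAutWg;\bfQ)$, and the stabilisation map is induced by a compatible pair consisting of the inclusion $G_g\hookrightarrow G_{g+1}$ and a map of coefficient algebras $A_{g+1}\to A_g$.

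Because $A_g$ is built functorially from tensor powers of $H$, in each fixed total degree it decomposes as a sum of irreducible algebraic representations $V_\lambda$ of $G_g$ of uniformly bounded highest weight. I would use the first and second fundamental theorems of invariant theory for $\Sp$ and $\oO$ to describe these isotypical pieces, and in particular to show that both the multiplicity spaces and the invariant part $A_g^{G_g}$, together with the comparison maps to the $(g+1)$st stage, are independent of $g$ through a range growing linearly in $g$. The parity-dependent constant $c$ enters here through the difference between the symplectic and orthogonal stable invariants.

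The crux is then the quantitative Borel vanishing theorem: for a nontrivial irreducible algebraic representation $V_\lambda$ one has $\oH^{p}(G_g;V_\lambda)=0$ for $p$ below an explicit bound that grows linearly in $g$ and degrades with the size of $\lambda$, while for trivial coefficients $\oH^{*}(G_g;\bfQ)$ is stable with the same slope. Feeding the weight bounds of the previous step into this vanishing line and optimising, I expect that in total degree $*\le g-2+c$ every nontrivial isotypical summand of $\oH^{*}(G_g;A_g)$ vanishes, leaving only $\oH^{*}(G_g;A_g^{G_g})=\oH^{*}(G_g;\bfQ)\otimes A_g^{G_g}$, on which the stabilisation map is an isomorphism by the invariant-theoretic step together with cohomological stability for trivial coefficients. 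The main obstacle is the bookkeeping: one must simultaneously control the Chevalley--Eilenberg degree in $A_g$, the group-cohomological degree, and the internal weight of each $V_\lambda$, and verify that their worst-case combination still clears the Borel line throughout the asserted range, with the symplectic and orthogonal bounds differing by the parity shift. Extracting the sharp constant $g-2+c$ rather than a weaker linear estimate is precisely the delicate point, and it is here that the linear Borel vanishing bound outperforms Charney's stability range.

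Finally, the block-diffeomorphism statement for $2n\ge6$ follows from the homotopy-automorphism case because Berglund--Madsen compute $\BlockBDiffWg$ rationally from the same arithmetic group $G_g$ acting on a coefficient algebra of the same functorial type, now incorporating a surgery-theoretic (L-theoretic) contribution. The hypothesis $2n\ge6$ is what makes surgery applicable; granting it, the coefficient modules are again built from bounded-weight representations of $G_g$, so the identical invariant-theory-plus-Borel analysis applies verbatim and yields the same range.
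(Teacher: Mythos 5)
Your proposal follows the same skeleton as the paper's proof: compare the Serre spectral sequences of the maps \eqref{mapstoarithmeticgroups}, identify the coefficient system with the Chevalley--Eilenberg homology of $\frg_g$ (resp.\ $\frg_g\oplus\fra_g$), regard the chains in a fixed degree as a Schur functor of bounded degree evaluated at $H(g)$, and prove $E_2$-stability by invariant theory on the trivial isotypical part and Borel-type vanishing on the rest; this is precisely the content of \cref{stabilisationiso}, proved via \cref{CEchains1}, \cref{coinvariantiso} and \cref{Borel}. (Two glosses: $\frg_g$ consists of derivations annihilating the form class $\omega$, not all positive-degree derivations; and passing from $E_2$-stability to the abutment needs a Zeeman-type comparison or, as in the paper, Berglund--Madsen's collapse result.)

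However, there is a genuine gap exactly at the step you yourself flag as delicate, and it is not a deferrable bookkeeping issue --- it is the heart of the matter. You take as input a vanishing statement for $\oH^p(G_g;V_\lambda)$ with nontrivial irreducible $V_\lambda$, valid for $p$ below ``an explicit bound that grows linearly in $g$ and degrades with the size of $\lambda$'', and then ``expect'' that optimising over the weights occurring in the coefficients clears the line $*\le g-2+c$. Whether this works depends entirely on the shape of that bound. In total degree $p+q\le g-2+c$, the terms with small Chevalley--Eilenberg degree $q$ have group-homological degree $p$ within $q$ of $g-2+c$, and they do contain nontrivial isotypical summands; so one needs vanishing for all $p\le g-2+c-q$ for summands of weight up to roughly $3q/n$ (and up to $2q$ in the block case, not $3q/n$ --- your ``verbatim'' claim hides this). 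Any bound whose slope in $g$ is less than $1$, such as the ranges in Borel's original papers, misses these terms no matter how small the weight is, and then your argument only yields a stability range of the same sub-unital slope, not $*\le g-2+c$. What the paper uses instead is Tshishiku's improvement, packaged as \cref{Borel}: the isomorphism $\oH_*(G;V)\cong\oH_*(\Omega^{\infty+2c}_0\KO;\bfQ)\otimes V_{\bfG_g(\bfQ)}$ holds for $*\le g-2+c$ \emph{uniformly in the finite-dimensional module $V$}, with no degradation in the weight; with this input the optimisation you worry about becomes vacuous. (A weight-penalised slope-$1$ bound with penalty coefficient at most $n/3$ would also suffice for homotopy automorphisms, as the remark following \cref{Borel} indicates, but you neither state nor cite any result of this strength.) Finally, note that \cref{Borel} requires $g\ge 4-c$, so the cases $p\le 1$ need the separate argument used in the proof of \cref{stabilisationiso}, namely that $\oH_1(G_g;-)$ vanishes on finite-dimensional modules for $g\ge2$ and that taking coinvariants is therefore exact.
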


\begin{nrem}
For $n\neq3,7$ odd, \cref{mainthm} gives the best possible isomorphism range of \emph{slope} $1$, i.e.\,a range of the form $*\le g+k$ for a constant $k$. This is because for such $n$, the groups $\oH_1(\BhAutWg;\bfQ)$ and $\oH_1(\BlockBDiffWg;\bfQ)$ are $1$-dimensional for $g=1$ and trivial otherwise, which can be derived for instance from a combination of \cite[Prop.\,5.3, Ex.\,5.5]{BerglundMadsen} with \cite[Lem.\,A.1, A.2]{Krannich}.
\end{nrem}

\begin{nrem}
Building on Berglund--Madsen's argument, Grey \cite{Grey} established rational homological stability results for the spaces of homotopy equivalences and block diffeomorphisms of connected sums of products of spheres of possibly different dimensions. An improvement of his results similar to \cref{mainthm} would require a slope $1$ range for Borel's vanishing results for the rational cohomology of $\GL_n(\bfZ)$, which is not known to hold.
\end{nrem}

\subsection*{Stability of Serre filtrations} We prove \cref{mainthm} as a special case of a stronger, but more technical result concerning the Serre filtrations associated to the maps\begin{equation}\label{mapstoarithmeticgroups}\BhAutuo_\partial(W_{g,1})\lra\oB G_g\quad\text{ and }\quad\BlockBDiffuo_\partial(W_{g,1})\lra\oB G_g\end{equation} induced by the natural actions $\hAutuo_\partial(W_{g,1})\ra\GL(\oH_n(W_{g,1}))$ and $\BlockDiffuo_\partial(W_{g,1})\ra\GL(\oH_n(W_{g,1}))$ on the middle homology, whose images $G_g\subset \GL(\oH_n(W_{g,1}))$ turn out to agree (see e.g.\,\cite[Ex.\,5.5]{BerglundMadsen}). Let us fix some notation in order to state the result. For a map $X\ra Y$ to a connected space $Y$, we denote the induced Serre filtrations of $\oH_*(X)$ and $\oH^*(X)$ by \[0\subset F^0\oH_*(X)\subset \ldots\subset F^*\oH_*(X)=\oH_*(X)\quad\text{ and }\quad\oH^*(X)=F_0\oH^*(X)\supset \ldots\supset F_*\oH^*(X)\supset0.\] In particular, $F^0\oH_*(X)$ is the image of the map on homology $\oH_*(F)\ra\oH_*(X)$ induced by the homotopy fibre $F\ra X$ and $F_1\oH^*(X)$ is the kernel of $\oH^*(X)\ra\oH^*(F)$. These filtrations are natural in commutative squares, and as the stabilisation maps for $\BhAutuo_\partial(W_{g,1})$ and $\BlockBDiffuo_\partial(W_{g,1})$ cover a map $\oB G_g\ra \oB G_{g+1}$ induced by the inclusion $W_{g,1}\subset W_{g+1,1}$ (see \cref{arithmeticgroups} below), the Serre filtrations associated to the maps \eqref{mapstoarithmeticgroups} are preserved by the stabilisation maps considered in \cref{mainthm}.

\begin{bigthm}\label{betterthm}
Let $g\ge2$ and set $c=1$ if $n$ is odd and $c=0$ if $n$ is even. For $2n\ge4$, the stabilisation map between the filtration steps of the Serre filtration induced by \eqref{mapstoarithmeticgroups},
\[F_p\oH_*(\BhAutWg;\bfQ)\lra F_p\oH_*(\BhAutWgone;\bfQ),\] is an isomorphism for $p\le g-2+c$ and $*\le ({2n}/{3})(2-c) g-1$. For $2n\ge 6$, the analogous map \[F_p\oH_*(\BlockBDiffWg;\bfQ)\lra F_p\oH_*(\BlockBDiffWgone;\bfQ)\] is an isomorphism for $p\le g-2+c$ and $*\le 2k_n(2-c) g-1$, where 
\[k_n=\begin{cases}
4&\mbox{if }n\equiv 0\Mod{4}\text{ and }n\neq4,8\\
3&\mbox{if }n\equiv 1\Mod{4}\text{ and }n\neq5\\
2&\mbox{if }n\equiv 2\Mod{4}\text{ or }n=8\\
1&\mbox{if }n\equiv 3\Mod{4}\text{ or }n=4,5.\\
\end{cases}\]
\end{bigthm}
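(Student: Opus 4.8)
The plan is to follow the strategy of Berglund--Madsen \cite{BerglundMadsen} and analyse the Serre spectral sequences of the two fibrations underlying \eqref{mapstoarithmeticgroups}, but to control their behaviour under stabilisation by replacing Charney's input with invariant theory and Borel's theorem. Write $F_g$ for the homotopy fibre over the basepoint of $\oB G_g$ of either map in \eqref{mapstoarithmeticgroups}. First I would recall Berglund--Madsen's rational models, identifying $\oH^*(F_g;\bfQ)$ with the Chevalley--Eilenberg cohomology $\oH^*_{\CE}(\frg_g)$ of an explicit graded Lie algebra $\frg_g$ of derivations, assembled functorially from the standard $G_g$-representation $H_g=\oH_n(W_{g,1};\bfQ)$. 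The homological Serre spectral sequence then reads
\[E^2_{p,q}=\oH_p(G_g;\oH_q(F_g;\bfQ))\Longrightarrow \oH_{p+q}(\BhAutWg;\bfQ)\]
(and analogously for the block diffeomorphism fibration), with $F_p\oH_*$ the base-degree filtration. Since stabilisation covers $\oB G_g\ra\oB G_{g+1}$ (see \cref{arithmeticgroups}), it induces a morphism of spectral sequences, so it suffices to show that the map on $E^2$-pages is an isomorphism for $p\le g-2+c$ throughout the stated total-degree range, and that the differentials cannot disturb the associated graded of $F_p\oH_*$ there.

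The second step is to pin down the coefficient systems $\oH_q(F_g;\bfQ)$ as $G_g$-representations by classical invariant theory. Here $G_g$ is the integral symplectic group $\Sp_{2g}(\bfZ)$ when $n$ is odd, and a finite-index subgroup of the integral orthogonal group of the hyperbolic form when $n$ is even, which is exactly the source of the parity constant $c$. As $\oH^*_{\CE}(\frg_g)$ is built from tensor powers of $H_g$, each $\oH_q(F_g;\bfQ)$ is a polynomial functor of $H_g$ whose tensor degree is bounded by a linear function of $q$; the relevant slope is governed by the bottom degree of the generators of $\frg_g$ and by the combinatorics of how $G_g$-invariant classes are formed, both of which depend on $n$ and, in the block case, on a surgery-theoretic correction term. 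The first and second fundamental theorems of invariant theory for $\Sp$ and $\oO$ then describe the decomposition of $\oH_q(F_g;\bfQ)$ into irreducibles and, in particular, show that the invariant subspaces $\oH_q(F_g;\bfQ)^{G_g}$ stabilise as $g$ grows, with an explicit stable range.

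The third step is to feed this into a quantitative form of Borel's vanishing theorem \cite{BorelI,BorelII}. For an algebraic $G_g$-representation $M$ that is polynomial of tensor degree $t$, this yields a natural isomorphism $\oH_p(G_g;M)\cong \oH_p(G_g;\bfQ)\otimes M^{G_g}$ in a range of the form $p\lesssim g-t$, together with the independence of $\oH_p(G_g;\bfQ)$ of $g$ in a slope-$1$ range. Combining the tensor-degree bound of the previous step with this range, and choosing the total-degree cut-off $*\le (2n/3)(2-c)g-1$ (respectively $*\le 2k_n(2-c)g-1$) precisely so that every coefficient system contributing in that range has tensor degree small enough for Borel's isomorphism to apply, one concludes that each $E^2_{p,q}$ with $p\le g-2+c$ maps isomorphically under stabilisation. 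In the block diffeomorphism case the constant $k_n$ enters here as the connectivity of the surgery-theoretic part of $\frg_g$, with the exceptional values at $n=4,5,8$ reflecting the familiar low-dimensional and Bott-periodicity coincidences in the underlying $\KO$-theoretic input.

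The main obstacle I expect is the bookkeeping that couples the two ranges: the base-degree range $p\le g-2+c$ supplied by Borel and invariant theory, and the total-degree range forced by the linear growth of the tensor degree of the coefficient systems. One must check not only that every $E^2_{p,q}$ in the stated region stabilises, but that the contributions of base degree $p>g-2+c$ and total degree at most $*$ are annihilated — equivalently, that high-base-degree homology of $G_g$ with low-tensor-degree coefficients vanishes — so that no incoming or outgoing differential in the Serre spectral sequence can move an unstable class into the filtration range. Granting this, the morphism of spectral sequences is an isomorphism on $E^2$, hence on $E^\infty$, in the relevant bidegrees, and the stability of the filtration steps $F_p\oH_*$ in \cref{betterthm} follows; the improvement over \cref{mainthm} is exactly the gain in passing from Charney's unitary stability range to the slope-$1$ ranges afforded by Borel's theorem and invariant theory.
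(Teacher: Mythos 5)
Your proposal reproduces the paper's overall architecture — the Serre spectral sequence over $\oB G_g$, Berglund--Madsen's Chevalley--Eilenberg model for the fibre, Schur-functor degree bounds on the chains, invariant theory for the (co)invariants, and Borel's theorem for the group homology — but the quantitative input you take from Borel is the wrong one, and the argument breaks there. You invoke an isomorphism $\oH_p(G_g;M)\cong\oH_p(G_g;\bfQ)\otimes M^{G_g}$ ``in a range of the form $p\lesssim g-t$'' for a representation of tensor degree $t$, and propose to choose the total-degree cut-off so that every contributing coefficient system has $t$ small enough for this to apply. This cannot work numerically: by \cref{CEchains1} the Schur degree of $C^{\CE}_q(\frg_g)$ is only bounded by $3q/n$, so over the stated range $q\le(2n/3)(2-c)g-1$ the tensor degree climbs to roughly $2(2-c)g\ge 2g$, making $g-t$ negative for most of the range; a representation-dependent Borel range can at best recover the ranges of \cref{mainthm}, never the uniform bound $p\le g-2+c$ of \cref{betterthm}. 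The paper's proof (\cref{stabilisationiso}) rests on \cref{Borel}, whose range $p\le g-2+c$ is \emph{independent of the representation} — this is Tshishiku's improvement, and the paper's remark states explicitly that \cref{betterthm} crucially relies on it. The total-degree cut-off plays a different role than the one you assign it: it is exactly the threshold below which the \emph{coinvariants} stabilise by invariant theory (\cref{coinvariantiso}: isomorphism for Schur degree $\le 2(2-c)g$), applied at the chain level and propagated to homology using exactness of $G_g$-coinvariants, which follows from the vanishing of $\oH_1(G_g;-)$ for $g\ge2$.

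Your treatment of the differentials is also a genuine gap. You propose to show that ``high-base-degree homology of $G_g$ with low-tensor-degree coefficients vanishes'', i.e.\ that entries with $p>g-2+c$ in the relevant total degrees are zero, so that nothing can map into the stable region. No such vanishing is true or available: above the Borel range the homology of arithmetic groups is uncontrolled — the paper itself cites Tshishiku's result that $\oH^g$ of arithmetic subgroups of $\oO_{g,g}(\bfQ)$ can be arbitrarily large. The paper kills the differentials by a naturality trick instead: Berglund--Madsen proved that the spectral sequence collapses at $E_2$ in a range of total degrees growing with $g$; since differentials commute with stabilisation and the stabilisation maps are isomorphisms on every entry of the stable region (for all larger genera), any differential whose \emph{target} lies in the stable region must vanish at genus $g$, because its stabilised image vanishes at sufficiently high genus — even though its source may be unstable. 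Outgoing differentials vanish for the same reason, the stable region being closed under differentials. Without this mechanism (or a substitute), your $E^2$-isomorphism does not descend to $E^\infty$ or to the filtration steps $F_p\oH_*$. (A minor further point: your gloss on $k_n$ is off — the exceptional values $n=4,5,8$ arise in \cref{CEchains2} because the slope $n/3$ coming from $\frg_g$ can be smaller than the bottom degree $4\lceil (n+1)/4\rceil-n$ of $(\pi_*(\BO)[-n]\otimes\bfQ)_{>0}$, not from Bott-periodicity coincidences.)
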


\cref{betterthm} includes \cref{mainthm} as the case $p=*$, but shows additionally that the subspaces of low filtration degree stabilise considerably faster than the full homology. This is useful to detect cohomology classes of known filtration degree, as we shall exemplify in the following.

\subsection*{Detecting $\kappa$-classes} It was shown in \cite{BerglundMadsen} that in a range of total degrees growing with $g$, the Serre spectral sequences of the two maps \eqref{mapstoarithmeticgroups} have a product structure and collapse on the $E_2$-page, which implies that the first filtration steps $F_1\oH^*(\BhAutuo_\partial(W_{\infty,1});\bfQ)$ and $F_1\oH^*(\BlockBDiffuo_\partial(W_{\infty,1});\bfQ)$ of the stable cohomology rings coincide with the ideal generated by the image of $\oH^{*>0}(\oB G_\infty;\bfQ)$. By work of Borel \cite{BorelI}, the cohomology $\oH^*(\oB G_\infty;\bfQ)$ is polynomial in generators $\sigma_{4i}$ of degree $4i>0$ if $n$ is even and $\sigma_{4i-2}$ of degree $4i-2>0$ if $n$ is odd. As elucidated in \cite{RWnote}, there are choices of such generators that measure the signature of total spaces of fibrations (or block bundles) with trivialised boundaries and fibre $W_{g,1}$ when pulled back to $\BhAutuo_\partial(W_{g,1})$ (or $\BlockBDiffuo_\partial(W_{g,1})$). In $\oH^*(\BlockBDiffuo_\partial(W_{g,1});\bfQ)$, these classes agree with the generalised Miller--Morita--Mumford (or $\kappa$-) classes $\kappa_{\cL_{i}}$ associated to Hirzebruch's L-classes $\cL_{i}\in\oH^{4i}(\BO;\bfQ)$ for $4i-2n>0$ as defined in \cite{EbertRWBlock}, which in turn coincide with the usually considered $\kappa$-classes $\kappa_{\cL_{i}}$ when pulled back further to $\oH^*(\BDiffuo_\partial(W_{g,1});\bfQ)$ along the usual map $\BDiffuo_\partial(W_{g,1})\ra\BlockBDiffuo_\partial(W_{g,1})$. Specialised to $p=0$ and taken duals, \cref{betterthm} thus reads as follows. 

\begin{bigcor}\label{secondthm}
Let $g\ge2$ and set $c=1$ if $n$ is odd and $c=0$ if $n$ is even. For $2n\ge4$, the natural map
\[\frac{\oH^*(\BhAutuo_\partial(W_{\infty,1});\bfQ)}{(\sigma_{4i-2c}\mid i>0)}\lra \frac{\oH^*(\BhAutuo_\partial(W_{g,1});\bfQ)}{F_1\oH^*(\BhAutuo_\partial(W_{g,1});\bfQ)}\]
is an isomorphism for $*\le ({2n}/{3})(2-c)g-1$. If $2n\ge6$, then the map
\[\frac{\oH^*(\BlockBDiffuo_\partial(W_{\infty,1});\bfQ)}{(\sigma_{4i-2c}\mid i>0)}\lra \frac{\oH^*(\BlockBDiffuo_\partial(W_{g,1});\bfQ)}{F_1\oH^*(\BlockBDiffuo_\partial(W_{g,1});\bfQ)}\] 
is an isomorphism for $*\le 2k_n(2-c) g-1$, where $k_n$ is defined as in \cref{betterthm}.
\end{bigcor}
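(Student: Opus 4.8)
The plan is to obtain \cref{secondthm} as the $\bfQ$-linear dual of the $p=0$ instance of \cref{betterthm}, after matching up the two quotients that occur. Throughout I work with $\bfQ$-coefficients and use that the spaces in question have rational homology of finite type in each degree, so that degreewise dualisation interchanges the homology and cohomology Serre filtrations and carries isomorphisms to isomorphisms.

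The first step is a purely linear-algebraic identification. For a map $X\ra Y$ with homotopy fibre $F$, the restriction $\oH^*(X;\bfQ)\ra\oH^*(F;\bfQ)$ is the $\bfQ$-dual of $\oH_*(F;\bfQ)\ra\oH_*(X;\bfQ)$, so $F_1\oH^*(X;\bfQ)=\ker(\oH^*(X)\ra\oH^*(F))$ is the annihilator of the $p=0$ step $F^0\oH_*(X;\bfQ)=\im(\oH_*(F)\ra\oH_*(X))$. Passing to the quotient gives a natural isomorphism $\oH^*(X;\bfQ)/F_1\oH^*(X;\bfQ)\cong\bigl(F^0\oH_*(X;\bfQ)\bigr)^\vee$. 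Since the Serre filtrations are natural in commutative squares, this identification is compatible with the stabilisation maps, so the right-hand quotient in the corollary is the $\bfQ$-dual of $F^0\oH_*(\BhAutuo_\partial(W_{g,1});\bfQ)$, and likewise in the block-diffeomorphism case.

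Next I would pin down the numerator on the left. By the product-structure and collapse statement of \cite{BerglundMadsen} recalled above, the stable step $F_1\oH^*(\BhAutuo_\partial(W_{\infty,1});\bfQ)$ is the ideal generated by the image of $\oH^{*>0}(\oB G_\infty;\bfQ)$; combined with Borel's computation \cite{BorelI} that $\oH^*(\oB G_\infty;\bfQ)$ is polynomial on the classes $\sigma_{4i-2c}$ ($i>0$), this ideal is exactly $(\sigma_{4i-2c}\mid i>0)$. Hence the left-hand side of the corollary is $\oH^*(\BhAutuo_\partial(W_{\infty,1});\bfQ)/F_1\cong\bigl(F^0\oH_*(\BhAutuo_\partial(W_{\infty,1});\bfQ)\bigr)^\vee$, and the corollary's map becomes the $\bfQ$-dual of the stabilisation map $F^0\oH_*(\BhAutuo_\partial(W_{g,1});\bfQ)\ra F^0\oH_*(\BhAutuo_\partial(W_{\infty,1});\bfQ)$.

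Finally I would invoke \cref{betterthm} with $p=0$, which is legitimate since $g\ge2$ forces $0\le g-2+c$. For every $g'\ge g$ the map $F^0\oH_*(\BhAutuo_\partial(W_{g',1});\bfQ)\ra F^0\oH_*(\BhAutuo_\partial(W_{g'+1,1});\bfQ)$ is then an isomorphism for $*\le(2n/3)(2-c)g'-1$, a range increasing in $g'$. As the stable step is the filtered colimit $\colim_{g'}F^0\oH_*(\BhAutuo_\partial(W_{g',1});\bfQ)$, composing over $g'\ge g$ shows that $F^0\oH_*(\BhAutuo_\partial(W_{g,1});\bfQ)\ra F^0\oH_*(\BhAutuo_\partial(W_{\infty,1});\bfQ)$ is an isomorphism for $*\le(2n/3)(2-c)g-1$; dualising yields the first assertion, and the block-diffeomorphism case is identical, with the range $*\le 2k_n(2-c)g-1$ supplied by the second half of \cref{betterthm}. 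The substantive input is entirely contained in \cref{betterthm} and the quoted stable collapse result, so the only thing that genuinely needs care here is that dualisation and passage to the colimit commute with the Serre filtrations; this holds because filtered colimits of $\bfQ$-vector spaces are exact and the homology filtration is defined by images.
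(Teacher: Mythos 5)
Your proposal is correct and follows essentially the same route as the paper, which obtains \cref{secondthm} precisely by specialising \cref{betterthm} to $p=0$ and dualising, after identifying the stable filtration step $F_1\oH^*(\BhAutuo_\partial(W_{\infty,1});\bfQ)$ with the ideal $(\sigma_{4i-2c}\mid i>0)$ via the quoted collapse result of Berglund--Madsen and Borel's computation of $\oH^*(\oB G_\infty;\bfQ)$. The details you supply (the annihilator description of $F_1$, compatibility with stabilisation, passage to the colimit over $g'\ge g$ using the increasing ranges) are exactly the steps the paper leaves implicit.
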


Using the mentioned collapse of the Serre spectral sequences of \eqref{mapstoarithmeticgroups} in a range, Berglund--Madsen \cite{BerglundMadsen} identified the stable cohomology rings $\oH^*(\BhAutuo_\partial(W_{\infty,1});\bfQ)$ and $\oH^*(\BlockBDiffuo_\partial(W_{\infty,1});\bfQ)$ abstractly in terms of $\oH^*(\oB G_\infty;\bfQ)$ and the homology of certain graph complexes. The ring $\oH^*(\BDiffuo_\partial(W_{\infty,1});\bfQ)$ on the other hand was computed by Galatius--Randal-Williams \cite{GRWstable} as the polynomial algebra on $\kappa$-classes $\{\kappa_c\mid c\in\cB_{>2n}\}$, where $\cB$ is the set of monomials in the Euler class $e$ of degree $2n$ and the Pontryagin classes $p_i$ of degree $4i$ for $i=\lceil\frac{n+1}{4}\rceil,\ldots,n-2,n-1$. The map on rational cohomology induced by $\BDiffuo_\partial(W_{g,1})\ra\BlockBDiffuo_\partial(W_{g,1})$ hence exhibits the morphism \[\frac{\bfQ[\kappa_{c}\mid c\in\cB_{>2n}]}{(\kappa_{\cL_i}\mid 4i-2n>0)}\lra \frac{\oH^*(\BlockBDiffuo_\partial(W_{\infty,1});\bfQ)}{(\kappa_{\cL_i}\mid 4i-2n>0)}\] as a retract, so \cref{secondthm} shows that every nontrivial class in the left hand quotient (for instance $\kappa_c$ for a decomposable monomial $c\in\cB_{>2n}$) is nontrivial in $\oH^*(\BlockBDiffuo_\partial(W_{g,1});\bfQ)$ for significantly smaller values of $g$ than predicted by \cref{mainthm}.

\subsection*{Diffeomorphism groups}Extending (block) diffeomorphism of an embedded disc $D^{2n}\subset W_{g,1}$ by the identity induces a commutative square
\begin{center}
\begin{tikzcd}
\BDiffuo_\partial(D^{2n})\arrow[r]\arrow[d]&\BDiffWg\arrow[d]\\
\BlockBDiffuo_\partial(D^{2n})\arrow[r]&\BlockBDiffWg
\end{tikzcd}
\end{center}
whose induced map on vertical homotopy fibres \[\widetilde{\Diffuo}_\partial(D^{2n})/\Diffuo_\partial(D^{2n})\lra \BlockDiffWg/\DiffWg\] is $(2n-4)$-connected by Morlet's lemma of disjunction \cite[Cor.\,3.2]{BurgheleaLashofRothenberg}. As noted by Randal-Williams \cite[Sect.\,4]{UpperBound}, this can be exploited for $g\gg0$ to conclude that $\widetilde{\Diffuo}_\partial(D^{2n})/\Diffuo_\partial(D^{2n})$ is rationally $(2n-5)$-connected as a consequence of Berglund--Madsen's work. Applying Morlet's lemma yet another time shows that $ \BlockDiffWg/\DiffWg$ as rationally $(2n-5)$-connected \emph{for all $g\ge0$}, so the map $\oH_*(\BDiffWg;\bfQ)\ra\oH_*(\BlockBDiffWg;\bfQ)$ is an isomorphism for $*\le2n-5$. This shows that in this range of degrees, our results for $\oH_*(\BlockBDiffuo_\partial(W_{g,1});\bfQ)$ are equally valid for $\oH_*(\BDiffuo_\partial(W_{g,1});\bfQ)$, which in particular implies that the latter groups stabilise in this range much quicker than what is known from Galatius--Randal-Williams' work \cite{GRWI}.

\begin{bigcor}For $2n\ge6$, the stabilisation map 
\[\oH_*(\BDiffWg;\bfQ)\lra\oH_*(\BDiffWgone;\bfQ)\]
is an isomorphism for $*\le \min\{g-2+c,2n-5\}$, where $c=1$ if $n$ is odd and $c=0$ if $n$ is even.
\end{bigcor}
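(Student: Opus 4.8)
The plan is to deduce this corollary formally from \cref{mainthm} together with the comparison between smooth and block diffeomorphisms recorded just above the statement. First I would assemble the four relevant maps into a single commutative square
\begin{center}
\begin{tikzcd}
\BDiffWg\arrow[r]\arrow[d]&\BDiffWgone\arrow[d]\\
\BlockBDiffWg\arrow[r]&\BlockBDiffWgone
\end{tikzcd}
\end{center}
whose horizontal arrows are the stabilisation maps and whose vertical arrows are the natural comparison maps $\BDiffuo_\partial(W_{g,1})\ra\BlockBDiffuo_\partial(W_{g,1})$. This square commutes because all four maps are induced by extending (block) diffeomorphisms by the identity on the newly glued handle, so the comparison map is natural with respect to this extension. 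Applying $\oH_*(-;\bfQ)$ turns the square into a commutative square of $\bfQ$-vector spaces in each degree $*$.

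Next I would record what is already known about three of the four arrows. By \cref{mainthm}, the lower horizontal map is an isomorphism for $*\le g-2+c$. For the two vertical maps I would invoke the observation recalled above that the homotopy fibre $\BlockDiffWg/\DiffWg$ is rationally $(2n-5)$-connected for \emph{every} $g\ge0$; this in turn rests on applying Morlet's lemma of disjunction twice, combined with the rational $(2n-5)$-connectivity of $\widetilde{\Diffuo}_\partial(D^{2n})/\Diffuo_\partial(D^{2n})$ extracted from Berglund--Madsen's work. Since this connectivity bound is uniform in $g$, it applies at both $g$ and $g+1$, so both comparison maps induce isomorphisms on $\oH_*(-;\bfQ)$ in degrees $*\le 2n-5$.

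Finally, for $*\le\min\{g-2+c,2n-5\}$ the lower horizontal map and both vertical maps are isomorphisms in degree $*$, so commutativity of the square forces the remaining top horizontal arrow — the stabilisation map for $\BDiffuo_\partial(W_{g,1})$ — to be an isomorphism as well (two-out-of-three applied to the equality of composites around the square), which is exactly the claim. I do not expect a genuine obstacle: the argument is a formal diagram chase, and all the substantive input has already been absorbed into \cref{mainthm} and the connectivity statement. The only point demanding minor care is to confirm commutativity of the square on the nose and the uniformity of the connectivity bound in $g$, both of which are routine once the extension-by-the-identity description of the maps is in place.
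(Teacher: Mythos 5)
Your proposal is correct and matches the paper's own argument: the paper likewise combines \cref{mainthm} for the block stabilisation map with the observation that $\BlockDiffWg/\DiffWg$ is rationally $(2n-5)$-connected for all $g\ge0$ (via Morlet's lemma of disjunction and Randal-Williams' observation), so that $\oH_*(\BDiffWg;\bfQ)\ra\oH_*(\BlockBDiffWg;\bfQ)$ is an isomorphism for $*\le 2n-5$ uniformly in $g$, and then concludes by the same naturality/two-out-of-three reasoning you describe.
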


\subsection*{Acknowledgements}
I would like to thank Oscar Randal-Williams for helpful discussions and comments on a draft of this note, and Alexander Kupers for his comments. I was supported by the European Research Council (ERC) under the European Union’s Horizon 2020 research and innovation programme (grant agreement No. 756444).

\numberwithin{equation}{section}

\section{Arithmetic groups and mapping class groups}\label{arithmeticgroups}
Following \cite[Sect.\,2.1]{KRW}, we fix $\epsilon=\pm1$ and a nondegenerate $\epsilon$-symmetric bilinear form \[\lambda\colon H(g)\otimes H(g)\lra\bfQ\] on a rational vector space $H(g)$ of dimension $2g$, required to be of signature $0$ if $\epsilon=1$. The automorphisms of $H(g)$ preserving the form $\lambda$, commonly denoted by $\Sp_{2g}(\bfQ)$ for $\epsilon=-1$ and by $\oO_{g,g}(\bfQ)$ for $\epsilon=1$, form the rational points of an algebraic group $\bfG_g\in\{\bfSp_{2g},\bfO_{g,g}\}$. We call a subgroup $G\le\bfG_g(\bfQ)$  \emph{arithmetic} if it is commensurable to $\bfG_g(\bfZ)$ and not entirely contained in the index $2$ subgroup $\SO_{g,g}(\bfQ)\le \oO_{g,g}(\bfQ)$ if $\bfG_g=\bfO_{g,g}$.\footnote{This condition is non-standard and serves to ensure that $G$ is Zariski dense in $\bfG_g(\bfQ)$ (see \cite[Sect.\,2.1.1]{KRW}).} Defining $H(g+1)=H(g)\oplus H(1)$ after having fixed $H(1)$, there are natural inclusion maps $\bfG_g(\bfQ)\ra\bfG_{g+1}(\bfQ)$ and we denote their colimit by $\bfG_\infty(\bfQ)$.

The modification of Berglund--Madsen's argument we shall present is mainly an application of the following homological vanishing result for arithmetic groups, which can be derived from a combination of a result of Margulis \cite{Margulis} with work of Borel \cite{BorelI,BorelII}, improved in range by Tshishiku \cite{TshishikuRange} (cf.\,\cite[Thm\,2.2, 2.3]{KRW} and \cite[Sect.\,3]{EbertRW}).

\begin{thm}\label{Borel}Set $c=1$ if $\epsilon=-1$ and $c=0$ if $\epsilon=1$. There is a map \[\oB\bfG_\infty(\bfQ)\lra\Omega^{\infty+2c}_0\KO\] such that for $g\ge4-c$ and any finite-dimensional $\bfQ[G]$-module $V$ for an arithmetic group $G\le \bfG_g(\bfQ)$, the natural maps
\[\oH_*(G;V)\xlra{\Delta} \oH_*(G;\bfQ)\otimes V_{G}\lra \oH_*(\Omega^{\infty+2c}_0\KO;\bfQ)\otimes V_{\bfG_g(\bfQ)}\] 
are isomorphisms for $*\le g-2+c$.
\end{thm}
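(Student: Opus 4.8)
The plan is to assemble the statement from three ingredients: Borel's computation of the stable (co)homology of arithmetic groups with trivial and with algebraic coefficients, Margulis's superrigidity to reduce an arbitrary coefficient module to an algebraic one, and Tshishiku's sharpening of Borel's stable range. I would begin with the case $V=\bfQ$. By Borel's stability theorem the restriction $\oH^*(\bfG_\infty(\bfZ);\bfQ)\to\oH^*(G;\bfQ)$ is an isomorphism in a range of degrees, and the stable cohomology is the free graded-commutative algebra on generators of degree $4i-2$ when $\epsilon=-1$ and of degree $4i$ when $\epsilon=1$, i.e.\ in degrees $4i-2c$. These are precisely the rational cohomology generators of $\Omega^{\infty+2c}_0\KO$, whose rational homotopy groups are $\bfQ$ in the positive degrees $j\equiv -2c\pmod 4$ and vanish otherwise. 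The map $\oB\bfG_\infty(\bfQ)\to\Omega^{\infty+2c}_0\KO$ is Borel's comparison of group cohomology with the continuous cohomology of $\bfG_g(\bfR)$, and the assertion that it is a rational homology isomorphism in the range $*\le g-2+c$ is Tshishiku's quantitative improvement of Borel's vanishing bound.

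Next I would treat the case that $V$ is (the restriction to $G$ of) an algebraic representation of $\bfG_g$. Since $\bfG_g$ is semisimple, $V$ is completely reducible, so $V=V^{\bfG_g}\oplus V'$ with $V'$ a sum of nontrivial irreducibles. Borel's theorem with nontrivial coefficients identifies $\oH^*(G;W)$ in the stable range with the relative Lie algebra cohomology $\oH^*(\frg,K;W)$, which vanishes for every nontrivial irreducible $W$; dually $\oH_*(G;V')=0$ in the range, while the trivial summand contributes $\oH_*(G;\bfQ)\otimes V^{\bfG_g}$. As $V'_G=0$ and the arithmeticity condition makes $G$ Zariski dense in $\bfG_g(\bfQ)$, one has $V_G=V^{\bfG_g}=V_{\bfG_g(\bfQ)}$, and the natural map $\Delta$ induced by the projection $V\to V_G$ is exactly the isomorphism just produced.

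The substantial step is to pass from an arbitrary finite-dimensional $\bfQ[G]$-module to an algebraic one, and this is where Margulis enters. For $g\ge4-c$ the group $G$ is an irreducible lattice in a semisimple Lie group of real rank $\ge2$, so by superrigidity the restriction of $V$ to a suitable finite-index subgroup $G'\le G$ extends to an algebraic representation of $\bfG_g$; in particular $V|_{G'}$ is algebraic and semisimple and the previous paragraph applies to $(G',V|_{G'})$. I would then descend to $G$ by a rational transfer argument along the finite quotient $Q=G/G'$: with $\bfQ$-coefficients the Lyndon--Hochschild--Serre spectral sequence collapses and $\oH_*(G;V)=\oH_*(G';V)^{Q}$, and in the stable range $Q$ acts trivially on $\oH_*(G';\bfQ)$ because it is realised by inner automorphisms of $\bfG_\infty(\bfQ)$. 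Taking $Q$-invariants of the isomorphism for $G'$ therefore yields $\oH_*(G;V)\cong\oH_*(G;\bfQ)\otimes V_G$ together with $V_G\cong V_{\bfG_g(\bfQ)}$ in degrees $*\le g-2+c$, and composing with the first paragraph gives the full statement.

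I expect the main obstacle to lie precisely in this reduction, namely in controlling $\bfQ[G]$-modules that are neither algebraic nor semisimple. Superrigidity only produces an algebraic extension after passing to finite index and only up to the Zariski closure of the image, so one must verify that the transfer back to $G$ is compatible with the natural map $\Delta$ to the coinvariants and does not shrink the range. Because the coinvariants functor is only right exact, the genuinely delicate point is not the vanishing itself but the compatibility of the possible nontrivial extension classes in $V$ with $\Delta$; the cleanest way to organise this is to argue functorially in $V$ and to reduce along a composition series with algebraic subquotients to the two extreme cases handled by Borel, keeping Tshishiku's bound throughout.
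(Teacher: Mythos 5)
Your proposal assembles the theorem from exactly the ingredients the paper itself relies on---Borel's stable (co)homology computations with trivial and algebraic coefficients, Margulis superrigidity to reduce arbitrary finite-dimensional $\bfQ[G]$-modules to algebraic ones, and Tshishiku's quantitative improvement of the range---which is precisely the derivation the paper cites (\cite{Margulis}, \cite{BorelI,BorelII}, \cite{TshishikuRange}, cf.\ \cite[Thm\,2.2, 2.3]{KRW} and \cite[Sect.\,3]{EbertRW}) in lieu of giving its own proof. Your closing diagnosis of the delicate point is also accurate: the cited sources dispose of non-algebraic, non-semisimple modules by first proving that \emph{all} finite-dimensional $\bfQ[G]$-modules of such higher-rank arithmetic groups are semisimple (so the extension problems you worry about do not arise), with the additional caveat that a finite-index subgroup $G'\le G$ may fail the paper's Zariski-density condition (e.g.\ land in $\SO_{g,g}$), so the identification $V_{G'}\cong V_{\bfG_g(\bfQ)}$ is only recovered after taking coinvariants by $G/G'$.
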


One immediate consequence of \cref{Borel} is that for an arithmetic subgroup $G\le\bfG_g(\bfQ)$, taking coinvariants is an exact functor on the category of finite dimensional $\bfQ[G]$-modules as long as $g\ge4-c$, because $\oH_1(G;-)$ vanishes on such modules since $\Omega^{\infty+2c}_0\KO$ is rationally $1$-connected. In fact, this vanishing of $\oH_1(G;-)$ holds already for $g\ge2$ (cf.\,\cite[Thm A.1]{BerglundMadsen} or \cite[Sect.\,2.1.2]{KRW}).

\begin{nrem}\ 
\begin{enumerate}
\item Note that the range in \cref{Borel} does not depend on the representation $V$; this is Thishiku's \cite{TshishikuRange} improvement of original Borel's result. The proof of \cref{betterthm} crucially relies on this improvement, whereas the weaker \cref{mainthm} could be proved using a coarser estimate of the range that depends on the representation, such as in \cite[Prop.\,3.7]{EbertRW}.
\item Other work of Tshishiku \cite{Tshishiku} shows that the cohomology $\oH^g(\Gamma;\bfQ)$ of arithmetic subgroups $\Gamma\le\oO_{g,g}(\bfQ)$ in degree $g$ can be arbitrarily large when $g$ is odd, which illustrates that the range in \cref{Borel} is for $\epsilon=1$ close to being optimal.
\end{enumerate}
\end{nrem}

The example of a form $\lambda\colon H(g)\otimes H(g)\ra\bfQ$ we have in mind is the $(-1)^n$-symmetric intersection form on the middle homology $H(g)=\oH_n(W_{g,1};\bfQ)$ of the manifolds $W_{g,1}=D^{2n}\sharp(S^n\times S^n)^{\sharp g}$. Occasionally, it will be convenient to view these manifolds alternatively as the iterated boundary connected sum $W_{g,1}=\natural^g W_{1,1}$ of $W_{1,1}=S^n\times S^n\backslash\interior{D^{2n}}$. The evident action of the homotopy mapping class group $\pi_0\hAutuo_\partial(W_{g,1})$ on $H(g)$ preserves $\lambda$ and hence gives rise to a morphism
\begin{equation}\label{homologyaction}\pi_0\hAutWg\lra \bfG_g(\bfQ)\end{equation} whose kernel turns out to be finite (see e.g.\,\cite[Prop.\,5.3]{BerglundMadsen}). Its image
\[G_g\coloneq\im\big(\pi_0\hAutWg\ra \bfG_g(\bfQ)\big)\] is an arithmetic subgroup (see e.g.\,\cite[p.\,29]{KRW}) which agrees with the image of the block mapping class group $\pi_0\BlockDiffWg$ in $\bfG_g(\bfQ)$ (see e.g.\,\cite[Ex.\,5.5]{BerglundMadsen}). Stabilising by $(-)\oplus H(1)$ as described above corresponds to taking the boundary connected sum with a further copy of $W_{1,1}$, so the inclusion $\bfG_g(\bfQ)\ra\bfG_{g+1}(\bfQ)$ is covered by the stabilisation map $\hAutuo_\partial(W_{g,1})\ra \hAutuo_\partial(W_{g+1,1})$ induced by extending homotopy equivalences by the identity.

\section{The argument of Berglund and Madsen}\label{BMargument}

In what follows, we sketch the proof of Berglund--Madsen's homology stability result for the spaces $\BhAutuo_\partial(W_{g,1})$ and $\BlockBDiffuo_\partial(W_{g,1})$ as presented in \cite{BerglundMadsen}. For $\BhAutuo_\partial(W_{g,1})$, one first shows that the classifying space of the component of the identity $\hAutuo^{\id}_\partial(W_{g,1})\subset\hAutuo_\partial(W_{g,1})$ is coformal, i.e.\,that its Quillen dg Lie algebra is formal. The homotopy Lie algebra \[\frg_g\coloneq \pi_{*+1}\BhAutuo^{\id}_\partial(W_{g,1})\otimes\bfQ\cong \Der^+_{\omega}\big(\bfL \big(H(g)[n-1] \big)\big)\] can be identified with the graded Lie algebra of those derivations of positive degree of the free Lie algebra on the graded vector space $H(g)=\oH_n(W_{g,1};\bfQ)$, concentrated in degree $(n-1)$, that vanish on the dual $\omega\in \bfL(H(g)[n-1])$ of the intersection form $\lambda$ (see \cite[Sect.\,3.5, Cor.\,3.13]{BerglundMadsen}). The coformality ensures that the Quillen spectral sequence
\[E_{p,q}^2=\oH^{\CE}_{p,q}(\frg_g)\implies \oH_{p+q}(\BhAutuo^{\id}_\partial(W_{g,1}))\]
collapses, so there is a (non-canonical) isomorphism between the rational homology of $\BhAutuo^{\id}_\partial(W_{g,1})$ and the Chevalley--Eilenberg homology of $\frg_g$, \begin{equation}\label{homologyhaut}\oH_*(\BhAutuo^{\id}_\partial(W_{g,1});\bfQ)\cong \oH^{\CE}_*(\frg_g).\end{equation} 
The isomorphism \eqref{homologyhaut} can be chosen to be compatible with the evident stabilisation maps and the natural action of $\pi_0\hAutuo_{\partial}(W_{g,1})$ on both sides, the one on the right being induced from the action on $H(g)$ via \eqref{homologyaction} by functoriality (see \cite[Prop.\,7.9]{BerglundMadsen}). This uses the fact that for $g\ge2$, every extension of finite dimensional $\bfQ[\pi_0\hAutuo_{\partial}(W_{g,1})]$-modules splits, since $\pi_0\hAutuo_{\partial}(W_{g,1})\ra G_g$ has finite kernel and $G_g$ has this property. Therefore, using finiteness of the latter kernel once again, the $E_2$-page of the Serre spectral sequence of the fibration sequence
\[\BhAutuo^{\id}_{\partial}(W_{g,1})\lra\BhAutuo_{\partial}(W_{g,1})\lra\oB\pi_0\hAutuo_{\partial}(W_{g,1})\] is isomorphic to $\oH_*(G_g;\oH_*^{\CE}(\frg_g))$, so by a spectral sequence comparison, it suffices to show that \begin{equation}\label{E2pagestabilisation}\oH_*(G_g;\oH_*^{\CE}(\frg_g))\lra \oH_*(G_{g+1};\oH_*^{\CE}(\frg_{g+1}))\end{equation} induces an iso- and epimorphism in a suitable range of total degrees. One way of proving this is to show that the Chevalley--Eilenberg chains $C_*^{\CE}(\frg_g)$ in a fixed total degree are the value on $H(g)$ of a \emph{Schur functor} $P$ associated to a symmetric sequence $\{P(m)\}_{m\ge0}$, that is, an endofunctor on the category of $\bfQ$-vector spaces of the form
\[\textstyle{P(V)=\bigoplus_{m=0}^kP(m)\otimes_{\Sigma_m}V^{\otimes m}}\] for a sequence of $\bfQ$-vector spaces $P(m)$ equipped with an action of the symmetric groups $\Sigma_m$. The minimal choice of $k$ is called the \emph{degree} of $P$, which we denote by $\deg(P)$.

\begin{lem}[Berglund--Madsen {\cite[Lem.\,7.2]{BerglundMadsen}}]\label{CEchains1}For $n\ge2$, the Chevalley--Eilenberg chains $C^{\CE}_q(\frg_g)$ in total degree $q$ are isomorphic to the value of a Schur functor of degree $\le 3q/n$ on $H(g)$.
\end{lem}

This identification of $C_*^{\CE}(\frg_g)$ is tailored to invoke a homological stability result due to Charney \cite{Charney}, which implies that $\oH_*(G_g;C_*^{\CE}(\frg_g))\ra \oH_*(G_{g+1};C_*^{\CE}(\frg_{g+1}))$ is an isomorphism in a range of total degrees with slope $1/2$. From this, the desired property of \eqref{E2pagestabilisation} can be concluded from a hypercohomology argument (see \cite[Prop.\,7.11]{BerglundMadsen}).

In the case of block diffeomorphisms, one proceeds similarly: the classifying space $\BlockBDiffuo_{\partial,\circ}(W_{g,1})$ of the kernel of the map $\BlockDiffuo_\partial(W_{g,1})\ra\pi_0\hAutuo_\partial(W_{g,1})$ is coformal and its homotopy Lie algebra
\[\pi_{*+1}\BlockBDiffuo_{\partial,\circ}(W_{g,1})\otimes \bfQ\cong \frg_g\oplus \fra_g\]
splits as a sum of the graded Lie algebra $\frg_g$ defined above with the graded abelian Lie algebra \[\fra_g\coloneq (H(g)\otimes \Pi[-n])_{\ge0},\] where $\Pi=\pi_*(\Omega\BO)\otimes\bfQ$ (see \cite[Cor.\,4.26]{BerglundMadsen}). By an argument analogous to the one for homotopy equivalences, it suffices to show that the stabilisation map \[\oH_*(G_g;\oH_*^{\CE}(\frg_g\oplus\fra_g))\lra \oH_*(G_{g+1};\oH_*^{\CE}(\frg_{g+1}\oplus\fra_{g+1}))\] is an isomorphism in the wished range of degrees, which is accomplished by proving that $C^{\CE}_q(\frg_g\oplus\fra_g)$ is the value of a Schur functor of degree $\le2q$ on $H(g)$ and finishing as before (see \cite[Prop.\,7.17]{BerglundMadsen}).

As we shall explain in the next section, it is this last step in the argument which we modify to achieve the better stability ranges stated in \cref{betterthm}, i.e.\,the use of Charney's result followed by an argument involving hypercohomology.

\section{Stability of Serre filtrations}\label{modification}
By the discussion of the previous section, the action of $\pi_0\hAutuo_\partial(W_{g,1})$ on $\oH_*(\BhAutuo_\partial^{\id}(W_{g,1});\bfQ)$ factors through the natural morphism $\pi_0\hAutuo_\partial(W_{g,1})\ra G_g$. As the latter has finite kernel, we conclude that the Serre spectral sequence whose Serre filtration \cref{betterthm} is concerned with is canonically isomorphic to the spectral sequence considered in \cref{BMargument}, i.e.\,the one induced by the map $\BhAutuo_\partial(W_{g,1})\ra\oB \pi_0\hAutuo_\partial(W_{g,1})$.  In particular, there is an isomorphism between its $E_2$-page and $\oH_*(G_g;\oH_*^{\CE}(\frg_g))$, compatible with the stabilisation map $\oH_p(G_g;\oH_q^{\CE}(\frg_g))\ra \oH_p(G_{g+1};\oH_q^{\CE}(\frg_{g+1}))$. It will follow from \cref{stabilisationiso} below that this stabilisation map is an isomorphism in the range $p\le g-2+c$ and $p+q\le (2n/3)(2-c)g-1$, which would imply the claim of \cref{betterthm} regarding $\BhAutuo_\partial(W_{g,1})$ if all incoming and outgoing differentials in this range of bidegrees were trivial. But Berglund--Madsen \cite[Sect.\,8]{BerglundMadsen} showed that the spectral sequence collapses on the $E_2$-page in a range of total degrees growing with $g$, so all differentials whose target is already in the stability range have to vanish, even though their source might not be in the stable range. Outgoing differentials in the above specified range vanish for the same reason, since this range is closed under differentials.

\begin{lem}\label{stabilisationiso}
Set $c=1$ if $\epsilon=-1$ and $c=0$ if $\epsilon=1$ is even. For $g\ge2$, the stabilisation map
\[\oH_p(G_g;\oH_q^{\CE}(\frg_g))\lra\oH_p(G_{g+1};\oH_q^{\CE}(\frg_{g+1}))\]
is an isomorphism for $p\le g-2+c$ and $q\le (2n/3)(2-c)g-1$.
\end{lem}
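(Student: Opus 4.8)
The plan is to combine the representation-uniform vanishing of \cref{Borel} with the classical invariant theory of the groups $\bfG_g$, reducing the assertion to a stabilisation statement for coinvariants of Schur functors. Throughout fix $q$ and write $M_g\coloneq\oH_q^{\CE}(\frg_g)$, regarded as a finite-dimensional algebraic representation of $\bfG_g(\bfQ)$ (restricted to $G_g$); this is legitimate because $M_g$ is a subquotient of tensor powers of the defining representation $H(g)$. I would first dispose of the case $p=1$, where $\oH_1(G_g;M_g)=0$ for all $g\ge2$, so that the stabilisation map is trivially an isomorphism. If $p\ge2$ and $p\le g-2+c$, then necessarily $g\ge 4-c$, so \cref{Borel} applies and identifies $\oH_p(G_g;M_g)$ naturally with $\oH_p(\Omega^{\infty+2c}_0\KO;\bfQ)\otimes (M_g)_{\bfG_g(\bfQ)}$; since the map $\oB\bfG_\infty(\bfQ)\to\Omega^{\infty+2c}_0\KO$ is defined on the stable group, the $\KO$-factor is constant along the stabilisation maps, and the stabilisation map becomes the identity on this factor tensored with the map on coinvariants. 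Finally, for $p=0$ one has $\oH_0(G_g;M_g)=(M_g)_{G_g}=(M_g)_{\bfG_g(\bfQ)}$, using that $G_g$ is Zariski dense in $\bfG_g$ and $M_g$ is algebraic. In every case it thus suffices to show that the stabilisation map on coinvariants
\[(M_g)_{\bfG_g(\bfQ)}\lra (M_{g+1})_{\bfG_{g+1}(\bfQ)}\]
is an isomorphism for $q\le(2n/3)(2-c)g-1$.

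To access the coinvariants I would pass from homology to chains. As $\bfG_g(\bfQ)$ consists of the rational points of a reductive group, every finite-dimensional algebraic representation is semisimple, so the coinvariants functor $(-)_{\bfG_g(\bfQ)}$ is exact and commutes with homology; hence $(M_g)_{\bfG_g(\bfQ)}=\oH_q\big((C^{\CE}_*(\frg_g))_{\bfG_g(\bfQ)}\big)$. By \cref{CEchains1}, $C^{\CE}_{q'}(\frg_g)=P_{q'}(H(g))$ for a Schur functor $P_{q'}$ of degree $\le 3q'/n$, functorially in $H(g)$, so the stabilisation map is a map of complexes whose term in degree $q'$ is the map on coinvariants of $P_{q'}(H(g))$. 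Since a chain map that is an isomorphism in all degrees $\le q+1$ is an isomorphism on $\oH_q$, and since $3(q+1)/n\le 2(2-c)g$ precisely when $q\le(2n/3)(2-c)g-1$ (this is the source of the $-1$ in the range), it now suffices to prove that for a Schur functor $P$ of degree $d\le 2(2-c)g$ the stabilisation map
\[(P(H(g)))_{\bfG_g(\bfQ)}\lra (P(H(g+1)))_{\bfG_{g+1}(\bfQ)}\]
is an isomorphism.

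This last statement is the heart of the argument, and the step I expect to be the main obstacle. Using the natural decomposition $(P(H(g)))_{\bfG_g(\bfQ)}=\bigoplus_m P(m)\otimes_{\Sigma_m}(H(g)^{\otimes m})_{\bfG_g(\bfQ)}$, which is compatible with stabilisation, it reduces to the tensor powers $H(g)^{\otimes m}$ with $m\le d$. By the first and second fundamental theorems of invariant theory for $\bfG_g\in\{\bfSp_{2g},\bfO_{g,g}\}$, the coinvariants $(H(g)^{\otimes m})_{\bfG_g(\bfQ)}$ are detected by the contractions indexed by perfect matchings of $m$ points, and the stabilisation map is compatible with this description; it is therefore an isomorphism as soon as these matchings are linearly independent at stage $g$. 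The sharp threshold for this independence is the crux: it is $m\le\dim H(g)=2g$ in the symplectic case $c=1$ and $m\le 2\dim H(g)=4g$ in the orthogonal case $c=0$, both of which read $m\le 2(2-c)g$ (for odd $m$ both coinvariant groups vanish, so only even $m$ matter). The factor $(2-c)$ appearing in \cref{stabilisationiso} is exactly this dichotomy between the orthogonal and symplectic second fundamental theorems, and establishing these sharp constants---rather than a coarser linear bound---is what makes the range optimal.
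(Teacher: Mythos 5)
Your proposal is correct and takes essentially the same route as the paper's proof: the case $p=1$ via vanishing of $\oH_1(G_g;-)$ on finite-dimensional modules, the case $p\ge2$ reduced to $p=0$ via the natural comparison with $\oH_p(\Omega^{\infty+2c}_0\KO;\bfQ)\otimes(-)_{\bfG_g(\bfQ)}$ from \cref{Borel}, and the case $p=0$ settled by exactness of coinvariants, passage to the Chevalley--Eilenberg chain level, the Schur functor bound of \cref{CEchains1}, and the first and second fundamental theorems of invariant theory (the paper's \cref{coinvariantiso}, where your thresholds $m\le 2(2-c)g$ and the origin of the $-1$ in the range match exactly). The only cosmetic differences are that you justify exactness of coinvariants by semisimplicity of algebraic representations of the reductive group $\bfG_g$ rather than by the $\oH_1$-vanishing, and that you sketch the matching-independence argument that the paper outsources to a citation.
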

\begin{proof}
The claim for $p=1$ follows from the fact that $\oH_1(G_g;-)$ vanishes on finite dimensional $\bfQ[G_g]$-modules for $g\ge2$. For the same reason, taking $G_g$-coinvariants of such modules is exact, so to establish the case $p=0$, it is enough to show that the stabilisation map on the chain level $C_q^{\CE}(\frg_g)_{\bfG_g(\bfQ)}\ra C_q^{\CE}(\frg_{g+1})_{\bfG_{g+1}(\bfQ)}$ is an isomorphism for $q\le (2n/3)(2-c)g$. By \cref{CEchains1}, the chains $C_q^{\CE}(\frg_g)$ in total degree $q$ are the value of a Schur functor of degree $\le 3q/n$, compatible with the stabilisation map (see \cite[Sect.\,7]{BerglundMadsen}), so the claim is implied by \cref{coinvariantiso} below. For $p\ge2$ and hence $g\ge4-c$, the map in consideration fits into a commutative square
\begin{center}
\begin{tikzcd}
\oH_p(G_g;\oH_q^{\CE}(\frg_g))\arrow[r]\arrow[d]&\oH_p(G_{g+1};\oH_q^{\CE}(\frg_{g+1}))\arrow[d]\\
\oH_p(\Omega^{\infty+2c}_0\KO;\bfQ)\otimes \oH_q^{\CE}(\frg_g)_{\bfG_g(\bfQ)}\arrow[r]&\oH_p(\Omega^{\infty+2c}_0\KO;\bfQ)\otimes \oH_q^{\CE}(\frg_{g+1})_{\bfG_{g+1}(\bfQ)}
\end{tikzcd}
\end{center} whose vertical arrows are isomorphisms for $p\le g-2+c$ by \cref{Borel}, which reduces the claim for $p\ge2$ to the case $p=0$ already proved.
\end{proof}

\begin{lem}\label{coinvariantiso}For a Schur functor $P$ of degree $\le k$ and $c$ as in \cref{stabilisationiso}, the morphism 
\[P(H(g))_{\bfG_g(\bfQ)}\lra P(H(g+1))_{\bfG_{g+1}(\bfQ)}\] is an isomorphism for $k\le 2(2-c)g$ and a monomorphism for $k\le 2(2-c)(g+1)$.
\end{lem}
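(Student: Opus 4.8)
The plan is to reduce the statement to the classical invariant theory of the isometry group $\bfG_g$ acting on tensor powers of $H(g)$. As $\bfG_g(\bfQ)$ is Zariski-dense in $\bfG_g$, its finite-dimensional rational representations are semisimple and its invariants coincide with those of the algebraic group; in particular $(-)_{\bfG_g(\bfQ)}$ is exact and the first and second fundamental theorems apply. Writing $P(V)=\bigoplus_{r\le k}P(r)\otimes_{\Sigma_r}V^{\otimes r}$ and using exactness to commute coinvariants past this decomposition, one obtains $P(H(g))_{\bfG_g(\bfQ)}=\bigoplus_{r\le k}P(r)\otimes_{\Sigma_r}(H(g)^{\otimes r})_{\bfG_g(\bfQ)}$, naturally in $g$. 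Since $\otimes_{\Sigma_r}P(r)$ is exact over $\bfQ$ and $P(r)=0$ for $r>k$, it suffices to show that for each $r\le k$ the stabilisation map
\[\theta_r\colon (H(g)^{\otimes r})_{\bfG_g(\bfQ)}\lra(H(g+1)^{\otimes r})_{\bfG_{g+1}(\bfQ)}\]
is an isomorphism for $r\le 2(2-c)g$ and a monomorphism for $r\le 2(2-c)(g+1)$.

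Next I would make $\theta_r$ explicit. The nondegenerate form $\lambda$ gives a $\bfG_g$-equivariant isomorphism $H(g)\cong H(g)^\ast$, which identifies coinvariants with invariants and the dual of $\theta_r$ with the map
\[\theta_r^\ast\colon (H(g+1)^{\otimes r})^{\bfG_{g+1}(\bfQ)}\lra(H(g)^{\otimes r})^{\bfG_g(\bfQ)}\]
induced by the orthogonal projection $\pi\colon H(g+1)\to H(g)$, the transpose of the isometric inclusion. For a perfect matching $M$ of $\{1,\dots,r\}$ let $\beta_M$ denote the invariant obtained by inserting at the pairs of $M$ the copairing $\omega\in H(g)\otimes H(g)$ dual to $\lambda$. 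As $\pi\otimes\pi$ sends $\omega$ for $g+1$ to $\omega$ for $g$ (the extra hyperbolic summand is annihilated), one finds $\theta_r^\ast(\beta_M)=\beta_M$: on indexing matchings $\theta_r^\ast$ is the identity, and both sides vanish for odd $r$.

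The two assertions now follow from the fundamental theorems for $\Sp$ and $\oO$. By the first fundamental theorem the $\beta_M$ span $(H(g)^{\otimes r})^{\bfG_g(\bfQ)}$, so $\theta_r^\ast$ is surjective and hence $\theta_r$ is injective for every $r$; this yields the monomorphism claim. By the second fundamental theorem the $(r-1)!!$ invariants $\beta_M$ are linearly independent exactly when $r\le 2(2-c)g$---the bound being $2g$ when $\epsilon=-1$ and $4g$ when $\epsilon=1$, with the first relation appearing in degree $2(2-c)g+2$. When $r\le 2(2-c)g$ we also have $r\le 2(2-c)(g+1)$, so $\{\beta_M\}$ is simultaneously a basis of source and target of $\theta_r^\ast$; since $\theta_r^\ast$ is the identity on these bases it is an isomorphism, whence so is $\theta_r$. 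Assembling these over all $r\le k$ proves the lemma.

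The main obstacle is the exact range in the last step: verifying, uniformly in the symplectic and orthogonal cases, that the first relation among the matching invariants $\beta_M$ occurs in degree precisely $2(2-c)g+2$, which is what converts the second fundamental theorem into the stated slope-$(2-c)$ range. A secondary point requiring care is that the pertinent invariant theory is that of the full orthogonal group $\oO_{g,g}$, not $\SO_{g,g}$, so that only matchings contribute and no Pfaffian-type invariant appears; this is ensured precisely because $\bfG_g$ is the full isometry group. One should also confirm that the identification of $\theta_r^\ast$ with the matching-to-matching map is natural in $g$, which is immediate from the isometry of the stabilisation inclusion.
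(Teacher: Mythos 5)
Your proposal is correct and follows essentially the same route as the paper: reduce to a single tensor-power summand of the Schur functor, commute the $\bfG_g(\bfQ)$-coinvariants past the $\Sigma_r$-quotient, dualise to a map of invariants, and apply the first and second fundamental theorems of invariant theory for $\Sp_{2g}(\bfC)$ and $\oO_{g,g}(\bfC)\cong\oO_{2g}(\bfC)$ together with Zariski density of $\bfG_g(\bfQ)$. The only difference is that you spell out the matching-invariant details (surjectivity of the restriction map on invariants for all $r$, and the first Gram/Pfaffian-type relation occurring in degree $2(2-c)g+2$), which the paper instead delegates to the cited results of Kupers--Randal-Williams.
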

\begin{proof}We can assume that $P$ is of the form $P(V)=P(k)\otimes_{\Sigma_k}V^{\otimes k}$. Writing
\[\big(P(k)\otimes_{\Sigma_k}H(g)^{\otimes k}\big)_{\bfG_g(\bfQ)}\cong \big((P(k)\otimes H(g)^{\otimes k})_{\Sigma_k}\big)_{\bfG_g(\bfQ)}\cong \big(P(k)\otimes H(g)^{\otimes k}_{\bfG_g(\bfQ)}\big)_{\Sigma_k},\] we see that it suffices to show that the morphism \[H(g)^{\otimes k}_{\bfG_g(\bfQ)}\lra H(g+1)^{\otimes k}_{\bfG_{g+1}(\bfQ)}\] has the asserted quality, or dually that 
\[\big(H(g+1)^{\otimes k}\big)^{\bfG_{g+1}(\bfQ)}\lra \big(H(g)^{\otimes k}\big)^{\bfG_g(\bfQ)}\] is an isomorphism for $k\le 2(2-c)g$ and an epimorphism for $k\le 2(2-c)(g+1)$. This follows from the first and second fundamental theorem of invariant theory for $\Sp_{2g}(\bfC)$ and $\oO_{g,g}(\bfC)\cong\oO_{2g}(\bfC)$ by applying $(-)\otimes_\bfQ\bfC$ and using Zariski density of $\bfG_g(\bfQ)\subset \bfG_g(\bfC)$ (c.f.\,\cite[Thm\,2.6, Sect.\,9.4]{KRW}).
\end{proof}

This finishes the proof of \cref{betterthm} for homotopy equivalences. The case of block diffeomorphisms is proved analogously by showing that the stabilisation map
\[\oH_p(G_g;\oH_q^{\CE}(\frg_g\oplus\fra_g))\lra\oH_p(G_{g+1};\oH_q^{\CE}(\frg_{g+1}\oplus\fra_{g+1}))\] is an isomorphism for $p\le g-2+c$ and $p+q\le 2k_n(2-c)g-1$, which can be derived by adapting the argument we gave to prove \cref{stabilisationiso}, using the following refinement of Berglund--Madsen's estimate \cite[Prop.\,7.17]{BerglundMadsen} on the degree of the relevant Schur functor.

\begin{lem}\label{CEchains2}For $n\ge3$, the Chevalley--Eilenberg chains $C^{\CE}_q(\frg_g\oplus \fra_g)$ in total degree $q$ are isomorphic to the value on $H(g)$ of a Schur functor of degree \[\deg(P)\le\begin{cases}
  q/4&\mbox{if }n\equiv 0\Mod{4}\text{ and }n\ge12\\
 q/3&\mbox{if }n\equiv 1\Mod{4}\text{ and }n\ge9\\
  q/2&\mbox{if }n\equiv 2\Mod{4}\text{ or }n=8.\\
  q&\mbox{if }n\equiv 3\Mod{4}\text{ or }n=4,5.\\
\end{cases}\]
\end{lem}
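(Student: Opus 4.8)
The plan is to exploit the fact that Chevalley--Eilenberg chains turn direct sums of (homotopy) Lie algebras into tensor products, so that $C^{\CE}_*(\frg_g\oplus\fra_g)\cong C^{\CE}_*(\frg_g)\otimes C^{\CE}_*(\fra_g)$ as symmetric sequences, compatibly with the stabilisation maps, and to estimate the degree of each tensor factor separately. Since the degree of a tensor product of Schur functors is the sum of the degrees while the degree of a direct sum is the maximum, splitting by total degree gives $\deg C^{\CE}_q(\frg_g\oplus\fra_g)\le\max_{a+b=q}\big(\deg C^{\CE}_a(\frg_g)+\deg C^{\CE}_b(\fra_g)\big)$. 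The first summand is already controlled by \cref{CEchains1}, which yields $\deg C^{\CE}_a(\frg_g)\le 3a/n$, so the remaining task is to bound the degree of the Chevalley--Eilenberg chains of the abelian Lie algebra $\fra_g$.

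For that second factor I would use two features of $\fra_g$. First, it is \emph{linear} in $H(g)$: each of its homogeneous pieces is a sum of copies of $H(g)$ and hence a Schur functor of degree $1$. Second, it is concentrated in sufficiently high degrees. As $\fra_g$ is abelian, $C^{\CE}_*(\fra_g)\cong\Sym(s\fra_g)$ with vanishing differential, so a basis element in total degree $b$ is a product of $l$ suspended generators, each of degree at least $d_0+1$, where $d_0$ denotes the lowest degree in which $\fra_g$ is nonzero; such a product is a Schur functor of degree $l\le b/(d_0+1)$. This gives $\deg C^{\CE}_b(\fra_g)\le b/(d_0+1)$, and combined with the bound for $\frg_g$ and the linearity of the maximum over $a+b=q$ one obtains $\deg C^{\CE}_q(\frg_g\oplus\fra_g)\le\max\big(3/n,\,1/(d_0+1)\big)\cdot q$.

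It then only remains to determine $d_0$. Writing $\fra_g=(H(g)\otimes\Pi[-n])_{\ge 0}$ with $\Pi=\pi_*(\Omega\BO)\otimes\bfQ$ concentrated, by Bott periodicity, in the positive degrees congruent to $3$ modulo $4$, the number $d_0$ is the least nonnegative integer of the form $j-n$ with $j\equiv 3\pmod 4$ and $j\ge n$. A short case analysis modulo $4$ gives $d_0=3,2,1,0$ according as $n\equiv 0,1,2,3\pmod 4$, so that $1/(d_0+1)=1/4,1/3,1/2,1$ respectively. Comparing $3/n$ with $1/(d_0+1)$ in each residue class—the former strictly dominates exactly in the small exceptional cases $n=4,5,8$, and the thresholds $n\ge 12$ and $n\ge 9$ are precisely where $3/n$ drops to $1/4$ and $1/3$—then reproduces the four bounds in the statement. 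The main point requiring care is the identification of $\fra_g$ and its bottom degree $d_0$: everything hinges on the exact degrees carried by $\Pi$ and on the direction and magnitude of the shift $[-n]$, and it is this dependence on $n\bmod 4$ that replaces Berglund--Madsen's uniform estimate $\deg\le 2q$ by the sharper residue-dependent bounds. Naturality of the Schur functor structure with respect to stabilisation is inherited from \cref{CEchains1} together with the evident functoriality of $\fra_g$ in $H(g)$.
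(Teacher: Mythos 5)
Your proposal is correct and takes essentially the same route as the paper: the decomposition the paper quotes from Berglund--Madsen's proof of their Prop.\,7.17, namely $C^{\CE}_*(\frg_g\oplus\fra_g)\cong\bigoplus_{m}(P\otimes Q)(m)\otimes_{\Sigma_m}H(g)^{\otimes m}$ with $Q(l)=(\pi_*(\BO)[-n]\otimes\bfQ)_{>0}^{\otimes l}$, is precisely your K\"unneth splitting $C^{\CE}_*(\frg_g)\otimes\Sym(s\fra_g)$, with the $\frg_g$-factor controlled by \cref{CEchains1} and the $\fra_g$-factor by Bott periodicity. Your identification of the bottom generator degrees $d_0+1=4,3,2,1$ according to $n\bmod 4$, and the concluding comparison with the slope $3/n$, match the paper's support estimates $\ge(n/3)k+4l,\,3l,\,2l,\,l$ and its final case distinction, so the two arguments differ only in that you reconstruct the structural isomorphism rather than citing it.
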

\begin{proof}
The proof of \cite[Prop.\,7.17]{BerglundMadsen} shows that there is a sequence of graded $\bfQ[\Sigma_m]$-modules $P(m)$ for $m\ge0$, supported in degrees $\ge(n/3)m$, that participates in a graded isomorphism 
\[\textstyle{C^{\CE}_*(\frg_g\oplus \fra_g)\cong \bigoplus_{m\ge0}(P\otimes Q)(m)\otimes_{\Sigma_m}H(g)^{\otimes m}}\] of $G_g$-modules, where $Q(m)=(\pi_*(\BO)[-n]\otimes\bfQ)_{>0}^{\otimes m}$ and
\[\textstyle{(P\otimes Q)(m)=\bigoplus_{k+l=m}\operatorname{Ind}^{\Sigma_m}_{\Sigma_k\times \Sigma_l}P(k)\otimes Q(l)}.\] The graded module $P(k)\otimes Q(l)$ is supported in degrees $\ge (n/3)k+l\ge k+l$, so the degree $q$-part of the Schur functor associated to $\{(P\otimes Q)(m)\}_{m\ge0}$ is at most of degree $\le q$, which proves the bottommost case of the statement. To obtain the improved estimate on the degree in the other cases, note that since $\pi_*(\BO)\otimes\bfQ$ is only nontrivial in degrees $*\equiv0\Mod{4}$, the $\Sigma_{k+l}$-module $P(k)\otimes Q(l)$ is in fact supported in degrees $\ge(n/3)k+4l$ if $n\equiv 0\Mod{4}$, in degrees $\ge(n/3)k+3l$ if $n\equiv 1\Mod{4}$, and in degrees $\ge(n/3)k+2l$ if $n\equiv 2\Mod{4}$. A careful case distinction finishes the proof.
\end{proof}

\bibliographystyle{amsalpha}
\bibliography{literature}
\vspace{0.2cm}
\end{document}